\documentclass{amsart}

\usepackage{amsmath}
\usepackage{amsthm}
\usepackage{amssymb}
\usepackage{amsbsy}
\usepackage{amsfonts}
\usepackage{amstext}
\usepackage{amscd}
\usepackage{tikz}
\usepackage{graphicx}

\usepackage{color}

\numberwithin{equation}{section}
\theoremstyle{plain}
\newtheorem{thm}{Theorem}[section]
\newtheorem{prop}[thm]{Proposition}

\newtheorem{lem}[thm]{Lemma}
\theoremstyle{definition}

\newtheorem{rem}[thm]{Remark}
\newtheorem{defi}[thm]{Definition}

\newcommand\blfootnote[1]{%
  \begingroup
  \renewcommand\thefootnote{}\footnote{#1}%
  \addtocounter{footnote}{-1}%
  \endgroup
}

\usepackage{graphicx}
\usepackage{amsmath,amsthm,amsfonts}

\usepackage{color}

\begin{document}
\title[Berry-Esseen for finite free convolution]
{A Berry-Esseen type theorem for finite free convolution}

\author{Octavio Arizmendi}
\address{Centro de Investigaci\'on en Matem\'aticas, A.C., Jalisco S/N, Col. Valenciana CP: 36023 Guanajuato, Gto, M\'exico}

\author{Daniel Perales}

\begin{abstract}
We prove that the rate of convergence for the central limit theorem in finite free convolution is of order $n^{-1/2}$. 

\end{abstract}
\maketitle

\blfootnote{\begin{minipage}[l]{0.3\textwidth} \includegraphics[trim=10cm 6cm 10cm 5cm,clip,scale=0.15]{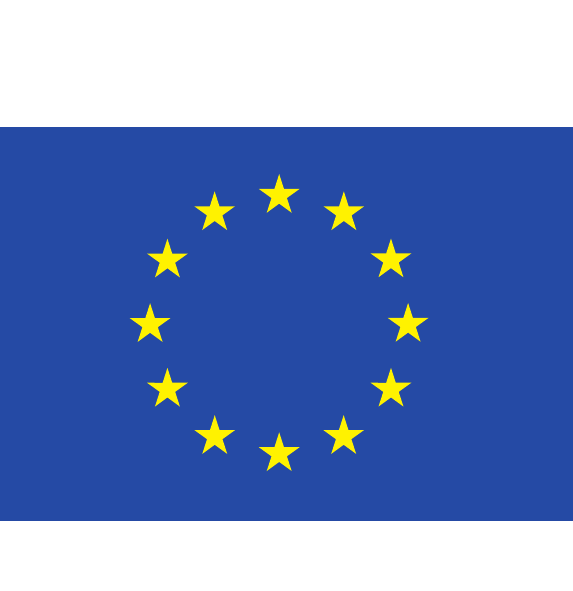} \end{minipage} 
 \hspace{-2cm} \begin{minipage}[l][1cm]{0.82\textwidth}
 	  This project has received funding from the European Union's Horizon 2020 research and innovation programme under the Marie Sk\l{}odowska-Curie grant agreement No 734922.
 	\end{minipage}}

\section{Introduction}

In recent years, the convolution of polynomials, first studied by Walsh \cite{walsh1922location},  was revisited by Marcus, Spielman and Srivastava \cite{marcus2015finite}, in order to exhibit bounds for the eigenvalues of expected characteristic polynomials of certain $d$-regular graphs, in their aim to construct bipartite Ramanujan graphs of all sizes \cite{marcus2015interlacing4}. The authors refer to this convolution as finite free additive convolution because of its relation to free convolution, see  \cite{arizmendi2018cumulants,marcus2015polynomial,marcus2015interlacing4}. 

In \cite{marcus2015polynomial}, Marcus showed that the Central Limit Theorem for this convolution is given by the polynomial $D_{1/\sqrt{d}} (H_d),$ where $H_{d}$ is an Hermite polynomial and may be written as
$$H_{d}(x)=d!\sum _{i=0}^{\lfloor {\tfrac {d}{2}}\rfloor }{\frac {(-1)^{i}}{i!(d-2i)!}}{\frac {x^{d-2i}}{2^{i}}},$$
and in general for any $\lambda>0$ and polynomial $p$ of degree $d$, $D_\lambda p(x):=\lambda^{d} p(x/\lambda)$ is the \emph{dilation by $\lambda$ of the polynomial $p$}.

In this note we are interested in the rate of convergence in the Central Limit Theorem for finite free convolution. Recall that for the central limit theorem in probability, the Berry-Esseen Theorem \cite{Ber,Es} states that if $\mu$ is a probability measure with $m_1(\mu)=0$, $m_2(\mu)=1$, and $\int_{\mathbb{R}} | x |^3 d\mu < \infty$, then the distance to the standard Gaussian distribution $\mathcal{N}$ is bounded as follows
$$d_{kol}(D_{\frac{1}{\sqrt{n}}}\mu^{\ast n},\mathcal{N})\leq C \frac{\int_{\mathbb{R}} | x |^3 d\mu}{\sqrt{n}},$$
where $d_{kol}$ denotes the Kolmogorov distance between measures, $D_b\mu$ denotes the dilation of a measure $\mu$ by a factor $b>0$,  $\ast$ denotes the classical convolution, and $C$ is an absolute constant.
Our main reasult shows that as for the classical and free case \cite{chistyakov2008limit,kargin2007berry}, in finite free probability we also achieve a rate of order $n^{-1/2}$. However, we use the L\'evy distance (see Section \ref{sec:Levy} for more details) instead of Kolmogorov distance, the reason being that we are dealing with measures supported in $d$ atoms with size $1/d$ and thus we cannot expect better.

Thus, for two polynomials of degree $d$, $p$ and $q$, let us define the distance between them to be $L(p,q):=d_L(\mu_p,\mu_q),$
where $d_L$ is the L\'evy distance and the measures $\mu_p$ and $\mu_q$ are the distributions of $p$ and $q$, respectively. 

In this language we can state our contribution as follows.
\begin{thm}
Let $d\in\mathbb{N}$ and let $p$ be a real polynomial of degree $d$ such that the first two moments of $\mu_p$ are $m_1=0$ and $m_2=1$. Then, there exists an \textbf{absolute constant} $C_d$, only depending on $d$, such that for all $n>0$,
$$L\left(D_{1/\sqrt{n}}(p^{\boxplus_d n}), D_{1/\sqrt{d}} (H_d)\right)< \frac{C_d}{\sqrt{n}},$$
\end{thm}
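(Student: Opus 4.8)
The plan is to pass to the coordinates that linearize $\boxplus_d$, namely the finite free cumulants of \cite{arizmendi2018cumulants}, to reduce the statement to a quantitative estimate on the coefficients of the polynomials, and finally to exploit the fact that the limiting Hermite polynomial $D_{1/\sqrt{d}}(H_d)$ has $d$ \emph{distinct} real roots in order to transfer coefficient convergence into L\'evy convergence. I would begin by recording how the finite free cumulants $\kappa_1,\dots,\kappa_d$ behave under the two operations in the statement: they are additive under $\boxplus_d$ and homogeneous under dilation, whence
$$\kappa_j\!\left(D_{1/\sqrt{n}}(p^{\boxplus_d n})\right)=n^{-j/2}\,\kappa_j\!\left(p^{\boxplus_d n}\right)=n^{\,1-j/2}\,\kappa_j(p).$$
The normalization $m_1=0$ forces $\kappa_1(p)=0$, so the first cumulant of the normalized convolution vanishes for every $n$; the second cumulant $\kappa_2(D_{1/\sqrt{n}}(p^{\boxplus_d n}))=\kappa_2(p)$ is independent of $n$ and hence equal to the second cumulant of the limit; and for each $j\ge 3$ the cumulant decays like $n^{\,1-j/2}$. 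The slowest-decaying term is the third cumulant, of order $n^{-1/2}$, and this is exactly the source of the claimed rate.

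Next I would convert the cumulant asymptotics into an estimate on the coefficients. Since $p$ has fixed degree $d$ there are only finitely many cumulants, and the coefficient--cumulant relations are fixed polynomial maps depending only on $d$. Consequently each coefficient of $D_{1/\sqrt{n}}(p^{\boxplus_d n})$ is a fixed polynomial in the quantities $n^{\,1-j/2}\kappa_j(p)$, and therefore converges, as $n\to\infty$, to the corresponding coefficient of $D_{1/\sqrt{d}}(H_d)$ with an error of order $n^{-1/2}$; the implied constant depends only on $d$ and on $\kappa_3(p),\dots,\kappa_d(p)$, which are themselves controlled by $d$ and the moments of $\mu_p$.

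The final and most delicate step is to turn $O(n^{-1/2})$ closeness of coefficients into $O(n^{-1/2})$ closeness in L\'evy distance. Two facts are essential here: finite free convolution preserves real-rootedness, so each measure $\mu_p$ above is a genuine atomic probability measure on $\mathbb{R}$ with $d$ masses of size $1/d$; and $H_d$ has $d$ distinct roots, so $D_{1/\sqrt{d}}(H_d)$ has simple roots with derivative bounded away from $0$ near each of them. Because the coefficients converge, all roots of $D_{1/\sqrt{n}}(p^{\boxplus_d n})$ eventually lie in a fixed compact interval and the polynomials converge uniformly there to the limit; a standard sign-change argument around each simple root then locates a root of the $n$-th polynomial within $O(n^{-1/2})$ of each root of the limit. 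Matching sorted roots and using that, for atomic measures with $d$ equal masses, the L\'evy distance is bounded by the maximal displacement of the sorted atoms yields $L(\,\cdot\,,\,\cdot\,)\le C_d\, n^{-1/2}$ for all $n$ beyond some threshold $N_0=N_0(d)$. For the finitely many $n<N_0$ one uses the trivial bound $d_L\le 1$, so that $L\sqrt{n}\le\sqrt{N_0}$, and enlarging $C_d$ absorbs these cases.

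I expect the main obstacle to be making the root-perturbation step genuinely quantitative and uniform: one must bound below the modulus of the limiting derivative near each of its roots, and bound above the diameter of the interval containing all roots, by constants depending only on $d$, and one must verify that the simple-root structure persists for all large $n$. This is precisely where the hypothesis that the limit is the simple-rooted Hermite polynomial, rather than a polynomial with repeated roots, becomes indispensable, and it is also what forces the use of the L\'evy distance in place of the Kolmogorov distance.
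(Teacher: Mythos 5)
Your proposal follows essentially the same route as the paper: additivity and homogeneity of the finite free cumulants give $\kappa_j(q_n)=n^{1-j/2}\kappa_j(p)$, the coefficient--cumulant formula converts this into $O(n^{-1/2})$ closeness of coefficients to those of $D_{1/\sqrt{d}}(H_d)$, and the simplicity of the Hermite roots lets one localize the roots of $q_n$ within $O(n^{-1/2})$ of the limit roots (the paper uses Rouch\'e's theorem where you invoke a sign-change argument, an inessential difference), after which the L\'evy distance is bounded by the maximal root displacement.

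The one point you pass over too quickly is the uniformity of the constant in $p$. You say the implied constant depends on $\kappa_3(p),\dots,\kappa_d(p)$, ``which are themselves controlled by $d$ and the moments of $\mu_p$''; but only $m_1$ and $m_2$ are fixed by hypothesis, so as stated your $C_d$ still depends on $p$, whereas the theorem demands a constant depending on $d$ alone. The missing ingredient is the paper's Lemmas 3.1 and 3.2: the normalization $\kappa_1=0$, $\kappa_2=1$ forces $\sum_i\lambda_i^2=d-1$, hence all roots lie in $(-\sqrt{d-1},\sqrt{d-1})$, hence all moments and therefore all finite free cumulants of $p$ are bounded by a constant $c_d$ depending only on $d$. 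With that observation inserted, your argument closes completely.
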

   
  The main tool to prove the above rate of convergence are the cumulants for finite free convolution, as we defined in \cite{arizmendi2018cumulants}.  These cumulants give a combinatorial approach to investigate this convolution and its relation to free probability. In particular we showed that finite free cumulants approach free cumulants, providing a
combinatorial perspective to the fact that finite free convolution approaches free convolution in the limit. Using these cumulants we were able to show that some properties of free convolution are valid already in the finite free case. The above theorem is another instance of the fact that many properties in free probability already appear in the finite level.

 Apart from this introduction this note consists of two sections. Section 2 gives the preliminaries for the theory of finite free probability and in Section 3 we give the proof of the main theorem, Theorem 1.1.
 
\section{Preliminaries}

We give very basic preliminaries on finite free convolution we refer to \cite{arizmendi2018cumulants,marcus2015polynomial} for details. 

\subsection{Finite Free Convolution}
 For two polynomials,
  $p(x) = \sum_{i=0}^d x^{d-i} (-1)^i  a^p_i$ and  $q(x) = \sum_{i=0}^d x^{d-i} (-1)^i  a^q_i$, the finite free additive convolution of $p$ and $q$ is given by
\begin{equation*}  p (x) \boxplus_{d} q (x) = \sum_{k=0}^d x^{d-r} (-1)^r  \sum_{i+j=r} \frac{(d-i)!(d-j)!}{d! (d-i-j)!} a^p_i a^q_j.
\end{equation*}

The finite $R$-transform of a polynomial  is defined by
\begin{equation} \label{definition R}
\mathcal{R} ^d _{p} (s) \equiv - \frac{1}{d} \frac{\partial}{\partial s} \ln \left( \sum_{i=0}^d  \frac{(-d)^i  a^p_i}{(d)_i} s^{i} \right) \qquad \text{mod } [s^{d}],
\end{equation} when $p$ is  the monic polynomial $p(x) = \sum_{i=0}^d x^{d-i} (-1)^i  a^p_i$.

We consider the truncated $R$-transform given by the sum of the first $d$ terms in the series expansion of $\mathcal{R} ^d _{p} $, which will have the cumulants as coefficients.

\begin{defi}[\cite{arizmendi2018cumulants}]
\label{defcumfin}
Let $p$ be a monic polynomial of degree $d$, and suppose the $\mathcal{R}^d_{p} (s)$ satisfies  
$$\mathcal{R}^d_{p} (s) \equiv \sum_{j=0}^{d-1} \kappa_{j+1}(p) s^j \quad \text{ mod } [ s^d ].$$

\begin{enumerate}
\item We call the sum of the first $d$ terms in the series expansion of $\mathcal{R} ^d$ the \emph{truncated $R$-transform} and denote by $\mathcal{\tilde{ R}}^d_{p} (s)$, i.e. $$\mathcal{\tilde{ R}}^d_{p} (s) :=\sum_{j=0}^{d-1} \kappa_{j+1}(p) s^j. $$

\item The numbers $\kappa_1(p), \kappa_2(p), \dots , \kappa_{d}(p)$ will be called the finite free cumulants. To simplify notation we will omit the dependence on $p$ when we deal with only one polynomial.
\end{enumerate}
\end{defi} 

We want to use the combinatorial framework in terms of moments for these cumulants. Hence, for a polynomial $p$ with roots $\lambda_1,....,\lambda_n$ we define the moments of $p$, by the formula $m_n(p)=\frac{1}{d}\sum^d_{i=1} \lambda_i^n$.

 These finite free cumulants satisfy the following properties which are the analog of  the properties in the axiomatization of cumulants by Lehner \cite{lehner2002free}, in non-commutative probability. 

\begin{enumerate}
\item[(1)] \textbf{Polynomial in the first $n$ moments:} $k_n(p)$ is a polynomial in the first $n$ moments of $p$ with leading term $$\frac{d^n}{(d)_n} m_n(p).$$

\item[(2)] \textbf{Homogeneity:} for all monic polynomials $p$ and $\lambda\neq 0$ we have $$\kappa_n(D_\lambda (p)) = \lambda^n \kappa_n(p).$$

\item[(3)] \textbf{Additivity:} for all monic polynomials $p$ and $q$, we have $$\kappa_n(p\boxplus_d q) = \kappa_n(p)+\kappa_n(q).$$
\end{enumerate}

\subsection{Moment-cumulant formula} Moment-cumulant formulas involve summing over partitions on the set $[n].$
Let us introduce this definition and some notation.
\begin{defi}
We call $\pi =\{V_{1},...,V_{r}\}$ a \textbf{partition }of the set $[n]:=\{1, 2,\dots, n\}$
if $V_{i}$ $(1\leq i\leq r)$ are pairwise disjoint, non-void
subsets of $[n]$, such that $V_{1}\cup V_{2}...\cup V_{r}=\{1, 2,\dots, n\}$. We call $
V_{1},V_{2},\dots,V_{r}$ the \textbf{blocks} of $\pi $. The number of blocks of 
$\pi $ is denoted by $\left\vert \pi \right\vert $. We will denote the set of partitions  of $[n]$ by $\mathcal{P}(n)$.
\end{defi}

The set $\mathcal{P}(n)$ can be equipped with the partial order $\leq$ of reverse refinement ($\pi\leq\sigma$ if and only if every block of $\pi$ is completely contained in a block of $\sigma$). With this order the minimum is given by the partition with $n$ blocks, $0_n=\{\{1\},\{2\},\cdots, \{n\}\}$, and the maximum is given by the partition with $1$ block, $1_n=\{\{1,2,\cdots,n\}\}$. 

Thus one can consider the incidence algebra of $\mathcal{P}(n)$. For two partitions $\sigma,\rho $ in the set of partitions $\mathcal{P}(n)$ the M\"obius function is given by 
$$\mu (\sigma ,\rho)=(-1)^{{|\sigma|-|\rho|}}(2!)^{{r_{3}}}(3!)^{{r_{4}}}\cdots ((n-1)!)^{{r_{n}}},$$
where $r_i$ is the number of blocks of $\rho$ that contain exactly $i$ blocks of $\sigma$.
In particular, for $\sigma=0_n$ we have
\begin{equation*}
\mu (0_n ,\rho )=(-1)^{n-{|\rho|}}(2!)^{{t_{3}}}(3!)^{{t_{4}}}\cdots ((n-1)!)^{{t_{n}}},
\end{equation*}
where $t_i$ is the number of blocks of $\rho$ of size $i$.

Given a sequence of complex numbers $f=\{f_n\}_{n\in \mathbb{N}}$ we may extend $f$ to partitions in a multiplicative way by the formula $$f_\pi=f_{|V_1|}f_{|V_2|}\cdots f_{|V_n|},$$
where $V_1,\dots,V_n$ are the blocks of $\pi.$ In this note we will frequently use the multiplicative extensions of the Pochhammer sequence $(d)_n=(d)(d-1)\cdots (d-n+1)$ and the factorial sequence $n!$, whose extensions will be denoted by $(d)_\pi$  and $N!_\pi$, respectively.

In \cite{arizmendi2018cumulants}, we gave formulas that relate the moments and coefficients of a polynomial with its finite free cumulants. First, we have a formula that writes coefficients in terms of cumulants.
\begin{prop}[Coefficient-cumulant formula] \label{aaacum}
Let $p(x) = \sum_{i=0}^d x^{d-i} (-1)^i  a_i$ be a polynomial of degree $d$ and let $(\kappa_n)^d_{n=1}$ be its finite free cumulants. The following formulas hold.
\begin{equation} \label{aaatercum}
a_{n} = \frac{(d)_n}{d^nn!} \sum _{\pi \in \mathcal{P}(n)} d ^{| \pi |}  \mu(0_n,\pi) \kappa_{\pi}, \qquad n\in\mathbb{N}.
\end{equation}
\end{prop}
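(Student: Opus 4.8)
The plan is to unwind the defining relation between the finite $R$-transform and the cumulants into a single generating-function identity, and then read off the coefficient of $s^n$ using the combinatorial exponential formula. First I would set
$$\phi(s) := \sum_{i=0}^d \frac{(-d)^i a_i}{(d)_i}\, s^i,$$
so that $\phi(0)=1$ (since $a_0=1$ for a monic polynomial) and, by \eqref{definition R}, $\mathcal{R}^d_p(s) = -\tfrac{1}{d}\,\phi'(s)/\phi(s)$. Invoking Definition \ref{defcumfin} we get $-\tfrac{1}{d}\tfrac{d}{ds}\ln\phi(s) \equiv \sum_{j\ge 0}\kappa_{j+1}s^j \pmod{s^d}$. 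Integrating termwise and using $\ln\phi(0)=0$ gives
$$\ln\phi(s) \equiv -d\sum_{n\ge 1}\frac{\kappa_n}{n}\,s^n \pmod{s^{d+1}}, \qquad\text{hence}\qquad \phi(s)\equiv\exp\!\Bigl(-d\sum_{n\ge 1}\frac{\kappa_n}{n}\,s^n\Bigr).$$
This reduces the problem to extracting $[s^n]\phi(s) = \tfrac{(-d)^n a_n}{(d)_n}$ from an exponential of a power series, valid for $1\le n\le d$ (and this is exactly the natural range, since $[s^n]\exp$ depends only on $\kappa_1,\dots,\kappa_n$, while $a_n=0$ for $n>d$).

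Next I would apply the exponential formula: for $g(s)=\sum_{m\ge 1}\gamma_m s^m$ one has $[s^n]e^{g(s)} = \tfrac{1}{n!}\sum_{\pi\in\mathcal{P}(n)}\prod_{V\in\pi}|V|!\,\gamma_{|V|}$. Here $\gamma_m = -d\,\kappa_m/m$, so each block contributes $|V|!\,\gamma_{|V|} = -d\,(|V|-1)!\,\kappa_{|V|}$, and the product over blocks factors as $(-d)^{|\pi|}\bigl(\prod_{V\in\pi}(|V|-1)!\bigr)\kappa_\pi$. The key bookkeeping step is to recognise the product $\prod_{V\in\pi}(|V|-1)!$ as a Möbius factor: a block of size $i$ contributes $(i-1)!$, so $\prod_{V\in\pi}(|V|-1)! = (0!)^{t_1}(1!)^{t_2}(2!)^{t_3}(3!)^{t_4}\cdots = (2!)^{t_3}(3!)^{t_4}\cdots$, which by the stated formula for $\mu(0_n,\pi)$ equals $(-1)^{n-|\pi|}\mu(0_n,\pi)$.

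Combining these, the signs collapse as $(-d)^{|\pi|}(-1)^{n-|\pi|} = (-1)^n d^{|\pi|}$, giving
$$\frac{(-d)^n a_n}{(d)_n} = \frac{(-1)^n}{n!}\sum_{\pi\in\mathcal{P}(n)} d^{|\pi|}\,\mu(0_n,\pi)\,\kappa_\pi.$$
Cancelling $(-1)^n$ and solving for $a_n$ yields precisely the claimed identity \eqref{aaatercum}.

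The main obstacle I anticipate is the exponential-formula step, i.e. passing from the naive integer-partition expansion $[s^n]e^{g}=\sum_{\lambda\vdash n}\prod_m \gamma_m^{r_m}/r_m!$ to the set-partition form: one must match the multinomial count $n!/\prod_m (m!)^{r_m}r_m!$ of set partitions of each type against the $1/r_m!$ and $|V|!$ factors, and keep the $\tfrac{1}{m}$-versus-$(m-1)!$ substitution straight. Once this combinatorial identity is established, the remainder is routine sign and Pochhammer bookkeeping, and the identification of $\prod_{V\in\pi}(|V|-1)!$ with $\mu(0_n,\pi)$ is exactly what converts the exponential expansion into the Möbius-weighted sum over $\mathcal{P}(n)$.
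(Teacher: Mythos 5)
Your derivation is correct: integrating the defining relation \eqref{definition R} to get $\phi(s)\equiv\exp(-d\sum_n \kappa_n s^n/n)$ and extracting $[s^n]$ via the exponential formula, with $\prod_{V\in\pi}(|V|-1)!=(-1)^{n-|\pi|}\mu(0_n,\pi)$, does yield \eqref{aaatercum}, and the sign and Pochhammer bookkeeping all check out (including the degenerate case $n>d$, where $(d)_n=0$). The present paper gives no proof of this proposition — it is imported from \cite{arizmendi2018cumulants} — but your argument is essentially the standard derivation given there, so there is nothing to flag.
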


We also have a moment-cumulant formula for finite free cumulants:
\begin{prop}
Let $p$ be a monic polynomial of degree $d$ and let  $(m_n)^\infty_{n=1}$ and $(\kappa_n)^d_{n=1}$, be the moments and cumulants of $p$, respectively. Then
\begin{align}
\kappa_n &= \frac{(-d)^{n-1}}{(n-1)!} \sum_{\sigma \in \mathcal{P}(n)} d^{|\sigma|}\mu(0,\sigma) m_{\sigma}  \sum_{\pi \geq \sigma} \frac{\mu(\pi,1_n)}{(d)_{\pi}},\nonumber
\end{align}
for $n=1,\ldots, d$ and
\begin{align} 
m_n   &= \frac{(-1)^n}{d^{n+1}(n-1)!} \sum_{\sigma \in \mathcal{P}(n)}d^{|\sigma|}\mu(0,\sigma)\kappa_\sigma \sum_{\pi \geq \sigma}- \mu(\pi,1_n)  (d)_\pi, \nonumber
\end{align}
for $n\in\mathbb{N}$.
\end{prop}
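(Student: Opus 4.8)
The plan is to route both formulas through two ordinary generating functions and the exponential formula on the partition lattice. To the monic polynomial $p(x)=\sum_{i=0}^d x^{d-i}(-1)^i a_i$ with roots $\lambda_1,\dots,\lambda_d$ I would attach the reversed polynomial $M_p(s):=\sum_{i=0}^d(-1)^i a_i s^i=\prod_{i=1}^d(1-\lambda_i s)$ and the series $\phi_p(s):=\sum_{i=0}^d\frac{(-d)^i a_i}{(d)_i}s^i$ that defines the finite $R$-transform. Expanding $\ln M_p$ via $\ln(1-\lambda_i s)$ gives $\ln M_p(s)=-d\sum_{n\ge 1}\frac{m_n}{n}s^n$, while integrating the defining identity $-\frac1d\frac{\partial}{\partial s}\ln\phi_p=\sum_{j\ge 0}\kappa_{j+1}s^j$ and using $\phi_p(0)=1$ gives $\ln\phi_p(s)=-d\sum_{n\ge1}\frac{\kappa_n}{n}s^n$ (the latter modulo $s^{d+1}$, which is all that is needed for $n\le d$). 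Thus the moments and the cumulants are the logarithmic coefficients of the two sequences $\{(-1)^i a_i\}_i$ and $\{(-d)^i a_i/(d)_i\}_i$, which differ only by the coefficientwise rescaling $d^i/(d)_i$.

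The engine is the exponential formula together with its inverse: for $F(s)=\exp(G(s))$ with $F=\sum_n\frac{F_n}{n!}s^n$, $F_0=1$, $G=\sum_{n\ge1}\frac{G_n}{n!}s^n$, one has $F_n=\sum_{\pi\in\mathcal{P}(n)}G_\pi$ and, inversely, $G_n=\sum_{\pi\in\mathcal{P}(n)}\mu(\pi,1_n)F_\pi$, where $G_\pi=\prod_{V\in\pi}G_{|V|}$. Feeding $M_p$ into the direct form, with $G_n=-d(n-1)!\,m_n$ so that the block product yields $\prod_{V\in\sigma}(|V|-1)!=(-1)^{n-|\sigma|}\mu(0_n,\sigma)$, reproduces the moment--coefficient formula $a_n=\frac{1}{n!}\sum_{\sigma\in\mathcal{P}(n)}d^{|\sigma|}\mu(0,\sigma)m_\sigma$; the identical computation for $\phi_p$ recovers the coefficient--cumulant formula of Proposition~\ref{aaacum}. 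The inverse form, applied to $\phi_p$ and to $M_p$, expresses $\kappa_n$ and $m_n$ directly as partition sums in the coefficients $a_k$.

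For the first displayed formula I would begin from the inverse form for $\phi_p$, namely $-d(n-1)!\,\kappa_n=\sum_{\pi\in\mathcal{P}(n)}\mu(\pi,1_n)\prod_{V\in\pi}\big(|V|!\,\frac{(-d)^{|V|}a_{|V|}}{(d)_{|V|}}\big)$, and then replace each $a_{|V|}$ by the moment--coefficient formula. The crucial combinatorial step is that inserting a sum over $\mathcal{P}(|V|)$ in each factor of a product indexed by the blocks of $\pi$ is exactly a single sum over all $\sigma\in\mathcal{P}(n)$ with $\sigma\le\pi$, via the lattice isomorphism $[0_n,\pi]\cong\prod_{V\in\pi}\mathcal{P}(V)$ and the multiplicativities $\mu(0_n,\sigma)=\prod_V\mu(0_{|V|},\sigma_V)$, $d^{|\sigma|}=\prod_V d^{|\sigma_V|}$ and $(d)_\pi=\prod_V(d)_{|V|}$. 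The $|V|!$ cancel the $1/|V|!$ from the moment--coefficient formula, the factors $(-d)^{|V|}$ collect into the prefactor $\frac{(-d)^{n-1}}{(n-1)!}$, and interchanging the order of summation (over $\sigma$ first, then over $\pi\ge\sigma$) produces exactly $\kappa_n=\frac{(-d)^{n-1}}{(n-1)!}\sum_\sigma d^{|\sigma|}\mu(0,\sigma)m_\sigma\sum_{\pi\ge\sigma}\frac{\mu(\pi,1_n)}{(d)_\pi}$. The second formula is the mirror image: apply the inverse form to $M_p$ to write $m_n$ as a partition sum in the $a_k$, substitute each $a_{|V|}$ using Proposition~\ref{aaacum}, and collect; now the factors $(d)_{|V|}$ from Proposition~\ref{aaacum} assemble into the $(d)_\pi$ in the numerator, while the signs and the power $d^{-(n+1)}$ drop out of the same bookkeeping. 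For $n>d$ there is nothing extra to check: partitions with a block of size larger than $d$ carry $a_{|V|}=0$, equivalently $(d)_\pi=0$, so only the defined cumulants $\kappa_1,\dots,\kappa_d$ ever appear.

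The generating-function identities and the exponential formula are routine; the step that deserves genuine care, and which I expect to be the main obstacle, is the reindexing of the nested partition sums---showing cleanly that the substitution collapses onto the interval $[\sigma,1_n]$ and that every normalizing constant ($d$-powers, the Pochhammer symbols $(d)_\pi$, and the factorials distinguishing the exponential- and ordinary-generating-function conventions) recombines into the stated prefactors. A useful consistency check, which I would carry out, is that the two displayed formulas are genuine inverses of one another; this is forced by the fact that $M_p$ and $\phi_p$ differ only through the invertible rescaling $d^i/(d)_i$.
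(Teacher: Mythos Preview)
The paper does not actually prove this proposition; it is quoted from \cite{arizmendi2018cumulants} as a preliminary result, so there is no ``paper's own proof'' to compare against here.

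That said, your approach is correct and is essentially the argument one finds in \cite{arizmendi2018cumulants}. The two logarithmic identities $\ln M_p(s)=-d\sum_{n\ge1}\frac{m_n}{n}s^n$ and $\ln\phi_p(s)\equiv-d\sum_{n\ge1}\frac{\kappa_n}{n}s^n$ are right, the exponential formula and its M\"obius inverse are applied correctly, and the reindexing via the lattice isomorphism $[0_n,\pi]\cong\prod_{V\in\pi}\mathcal{P}(V)$ together with the multiplicativity of $\mu(0_n,\cdot)$, $d^{|\cdot|}$, $m_\cdot$, $\kappa_\cdot$ and $(d)_\cdot$ is exactly what collapses the double sum into $\sum_\sigma(\cdots)\sum_{\pi\ge\sigma}(\cdots)$. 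Your bookkeeping of the prefactors checks out: for the first formula one gets $\frac{(-d)^{n-1}}{(n-1)!}$ directly, and for the second one obtains $\frac{(-1)^{n-1}}{d^{n+1}(n-1)!}\sum_\sigma(\cdots)\sum_{\pi\ge\sigma}\mu(\pi,1_n)(d)_\pi$, which is the stated formula after absorbing one sign into $-\mu(\pi,1_n)$.

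One small clarification on the $n>d$ case for the second formula: the cleanest way to phrase it is that in the intermediate expression $m_n=\frac{(-1)^{n-1}}{d(n-1)!}\sum_\pi\mu(\pi,1_n)N!_\pi\,a_\pi$, any $\pi$ with a block of size exceeding $d$ contributes $0$ because $a_{|V|}=0$; after substitution and reindexing, the same vanishing is encoded by $(d)_\pi=0$ whenever $\pi$ (and hence any $\pi\ge\sigma$ for such $\sigma$) has a block larger than $d$. So the terms where $\kappa_\sigma$ is formally undefined all carry coefficient zero, and the formula is meaningful for every $n\in\mathbb{N}$ using only $\kappa_1,\dots,\kappa_d$. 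Your proposal says this, just a bit tersely.
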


\begin{rem}
The explicit moment-cumulant formulas of the first three finite cumulants are
\begin{eqnarray*}
\kappa_1&=& m_1,\qquad
\kappa_2=\frac{d}{d-1}(m_2 - m_1^2),\\
\kappa_3&=&\frac{d^2}{(d-1)(d-2)}(2m_1^3 - 3m_1m_2 + m_3),
\end{eqnarray*}
and the explicit moment-cumulant formulas of the first three finite moments are
\begin{eqnarray*}
m_1& = &\kappa_1,\qquad
m_2 = \frac{d-1}{d}\kappa_2 + \kappa_1^2,\\
m_3& = &\frac{(d-1)(d-2)}{d^2}\kappa_3+\frac{3(d-1)}{d}\kappa_2\kappa_1 +\kappa_1^3.
\end{eqnarray*}
\end{rem}

\subsection{Convergence of polynomials and L\'evy distance}
\label{sec:Levy}
In this setting of \cite{arizmendi2018cumulants,marcus2015polynomial} convergence of polynomials is pointwise convergence of the coefficients. We prefer to consider the weak convergence of the induced measures since it is common with the free probability setting.  Thus,  for a polynomial $p$, with roots $\lambda_1,\lambda_2,\dots,\lambda_n$, we define its distribution $\mu_p$ as the uniform measure on the roots of $p$, $\mu_p=\tfrac{1}{d}\sum_i\delta_{\lambda_i}$.

To quantify this convergence we use the L\'evy distance
$$d_L(\mu,\nu):=\inf \{ \epsilon>0 \: | \: F(x-\epsilon)-\epsilon\leq G(x)\leq F(x+\epsilon)+\epsilon \: \: \: \text{for all } x\in \mathbb{R} \},$$
where $F$ and $G$ are the cumulative distribution functions of $\mu$ and $\nu$ respectively.

\section{Proof of Theorem 1.1}

Before going in to the proof of the main theorem we prove a couple of lemmas about the support and cumulants of polynomials with mean $0$ and variance $1$. 

\begin{lem}
Let $p$ be a real polynomial of degree $d$ with $\kappa_1=0$ and $\kappa_2=1$. Then the support of $p$ is contained in $(-\sqrt{d-1},\sqrt{d-1})$.
\end{lem}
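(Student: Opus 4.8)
The plan is to reduce the two cumulant hypotheses to conditions on the power sums of the roots and then run a short positivity argument. Writing the roots as $\lambda_1,\dots,\lambda_d$ (which I take to be real, since otherwise the support of $\mu_p$ would not lie in an interval of $\R$), the moment-cumulant formulas recorded in the Remark give $\kappa_1=m_1$ and $\kappa_2=\tfrac{d}{d-1}(m_2-m_1^2)$. Thus the hypothesis $\kappa_1=0$ says $\sum_{i=1}^d \lambda_i = d\,m_1 = 0$, and combining $\kappa_2=1$ with $m_1=0$ gives $m_2=\tfrac{d-1}{d}$, that is,
$$\sum_{i=1}^d \lambda_i^2 = d\,m_2 = d-1.$$

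First I would use the second identity to get the non-strict bound. Since the roots are real, every term $\lambda_i^2$ is nonnegative, so for each fixed index $j$,
$$\lambda_j^2 \le \sum_{i=1}^d \lambda_i^2 = d-1,$$
whence $|\lambda_j|\le \sqrt{d-1}$. This already confines the support to $[-\sqrt{d-1},\sqrt{d-1}]$, and it remains only to rule out the endpoints.

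To upgrade to the strict inequality I would invoke the mean-zero condition. Suppose $\lambda_j^2 = d-1$ for some $j$. Because $\sum_i \lambda_i^2 = d-1$ and all summands are nonnegative, this forces $\lambda_i = 0$ for every $i\neq j$. But then $\sum_{i=1}^d \lambda_i = \lambda_j = \pm\sqrt{d-1}\neq 0$ (using $d\ge 2$), contradicting $\sum_i \lambda_i = 0$. Hence $\lambda_j^2 < d-1$ for every $j$, i.e. $|\lambda_j| < \sqrt{d-1}$, which is exactly the claim. The only genuinely delicate point is this passage from the closed to the open interval, and it is resolved precisely by the first-moment (centering) hypothesis; the rest is elementary.
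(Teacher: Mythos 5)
Your argument is correct and follows essentially the same route as the paper: both derive $\sum_i \lambda_i^2 = d-1$ from $\kappa_2=\tfrac{d}{d-1}(m_2-m_1^2)=1$ with $m_1=0$, bound each root by the full sum of squares, and obtain strictness from the centering condition (the paper compresses your endpoint analysis into the parenthetical remark that there must be at least one other non-zero root). Your version merely spells out the details the paper leaves implicit.
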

\begin{proof}
If $\kappa_1=0$ and $\kappa_2=1$ then $$1=\kappa_2=\frac{d}{d-1}m_2=\frac{1}{d-1}\sum_{i=1}^d\lambda_i^2.$$  This means that $\lambda_i^2<d-1$ (strict because there is at least another non-zero $\lambda$) and thus $|\lambda_i|<\sqrt{d-1}$ for all $i=1,\dots,d$.
\end{proof}

\begin{lem}
Let $p$ be a real polynomial of degree $d$ with $\kappa_1=0$ and $\kappa_2=1$. Then there exists a constant $c_d$, depending only on $d$, such that 
$\max_{2\leq s\leq d}|\kappa_s(p)|<c_d.$
\end{lem}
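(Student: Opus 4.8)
The plan is to combine the support bound from the preceding lemma with the moment-cumulant formula. First I would observe that since $\kappa_1=0$ and $\kappa_2=1$, the previous lemma guarantees that every root $\lambda_i$ of $p$ satisfies $|\lambda_i|<\sqrt{d-1}$. Consequently every moment is uniformly controlled:
$$|m_s(p)| = \left| \frac{1}{d}\sum_{i=1}^d \lambda_i^s \right| < (d-1)^{s/2}, \qquad s=1,\ldots,d.$$
This is the crucial input: the hypotheses force all roots into a fixed compact interval depending only on $d$, so all low-order moments are bounded by constants depending only on $d$.

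Next I would invoke the moment-cumulant formula
$$\kappa_s = \frac{(-d)^{s-1}}{(s-1)!} \sum_{\sigma \in \mathcal{P}(s)} d^{|\sigma|}\mu(0,\sigma)\, m_{\sigma} \sum_{\pi \geq \sigma} \frac{\mu(\pi,1_s)}{(d)_{\pi}},$$
valid for $s=1,\ldots,d$. Each summand factors as a purely combinatorial coefficient (depending only on $d$ and $s$) times $m_{\sigma}=\prod_j m_{|V_j|}$, where $V_1,\dots,V_r$ are the blocks of $\sigma$. Because the block sizes of $\sigma$ sum to $s$, the moment bound above gives $|m_{\sigma}|<(d-1)^{s/2}$. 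The remaining ingredients, namely the prefactor $d^{s-1}/(s-1)!$, the powers $d^{|\sigma|}$, the M\"obius values $\mu(0,\sigma)$ and $\mu(\pi,1_s)$, and the denominators $(d)_{\pi}$, are all determined by $d$ and $s$ alone, and the sum ranges over the finite set $\mathcal{P}(s)$ together with its intervals, so there are only finitely many terms. One point requiring a little care is to confirm that the denominators $(d)_{\pi}=\prod_j (d)_{|V_j|}$ are nonzero and bounded away from zero by a constant depending only on $d$; this holds since every block has size at most $s\leq d$, so each factor $(d)_{|V_j|}=d(d-1)\cdots(d-|V_j|+1)$ is a positive integer.

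Putting these together, each $|\kappa_s|$ is bounded above by a finite sum of products of bounded quantities, hence by a constant depending only on $d$ and $s$. Taking the maximum over the finitely many values $2\leq s\leq d$ then produces the desired constant $c_d$. I do not anticipate a genuine obstacle here: once the support bound of the previous lemma is in hand, the argument is essentially bookkeeping, with the only subtlety being the nonvanishing of the combinatorial denominators, which is automatic in the range $s\leq d$.
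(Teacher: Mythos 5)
Your proposal is correct and follows exactly the paper's argument: use the support bound from the previous lemma to control all moments $m_s$ uniformly by a constant depending only on $d$, then feed this into the moment-cumulant formula, whose coefficients are purely combinatorial quantities determined by $d$ and $s$. The paper's own proof is a one-line version of the same reasoning; your write-up simply makes the bookkeeping explicit (and your moment bound $(d-1)^{s/2}$ is in fact slightly sharper than the paper's stated $(d-1)^n$).
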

\begin{proof}
By the previous lemma $m_n\leq (d-1)^n$ and then $\max_{2\leq s\leq d}|m_s(p)|<(d-1)^d$, so we can bound uniformly $\kappa_n$ by the moment-cumulant formulas. 
\end{proof}
Now we are able to prove the main theorem which we state again for convenience of the reader.

\begin{prop}
Let $p$ be a real polynomial with $\kappa_1=0$ and $\kappa_2=1$. Then, there exists $C_d$ such that for all $n>0$
$$L\left(D_{1/\sqrt{n}}(p^{\boxplus_d n}),D_{1/\sqrt{d}} (H_d)\right)< \frac{C_d}{\sqrt{n}}.$$
\end{prop}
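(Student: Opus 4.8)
The plan is to move from cumulants to coefficients to roots, using the two lemmas above together with the simplicity of the roots of the Hermite polynomial. First I would reduce the statement to a quantitative comparison of cumulants. By the additivity and homogeneity of the finite free cumulants,
$$\kappa_j\bigl(D_{1/\sqrt{n}}(p^{\boxplus_d n})\bigr)=n^{-j/2}\,\kappa_j\bigl(p^{\boxplus_d n}\bigr)=n^{1-j/2}\,\kappa_j(p),\qquad 1\le j\le d.$$
Since $\kappa_1(p)=0$ and $\kappa_2(p)=1$, the first two cumulants remain fixed at $0$ and $1$, whereas for $j\ge 3$ the exponent satisfies $1-j/2\le-1/2$, so these cumulants are $O(n^{-1/2})$. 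The limit $h:=D_{1/\sqrt{d}}(H_d)$ is the finite free Gaussian, whose cumulants are $\kappa_2(h)=1$ and $\kappa_j(h)=0$ for $j\ne2$; this is exactly Marcus' central limit theorem rephrased through cumulants. Combining these facts with the lemma bounding $|\kappa_j(p)|<c_d$, I obtain
$$\max_{1\le j\le d}\bigl|\kappa_j\bigl(D_{1/\sqrt{n}}(p^{\boxplus_d n})\bigr)-\kappa_j(h)\bigr|\le\frac{c_d}{\sqrt{n}}.$$

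Next I would transfer this estimate to the coefficients through the coefficient--cumulant formula \eqref{aaatercum}. Each coefficient $a_r$ with $1\le r\le d$ is a fixed polynomial in $\kappa_1,\dots,\kappa_r$, namely a weighted sum of the products $\kappa_\pi$ over $\pi\in\mathcal{P}(r)$, with weights depending only on $d$. Subtracting the two expressions and expanding each difference $\kappa_\pi\bigl(D_{1/\sqrt{n}}(p^{\boxplus_d n})\bigr)-\kappa_\pi(h)$ by telescoping over the blocks of $\pi$, every surviving term carries at least one factor $\kappa_j\bigl(D_{1/\sqrt{n}}(p^{\boxplus_d n})\bigr)-\kappa_j(h)$, which is $O(n^{-1/2})$ by the previous step, while the remaining factors are cumulants bounded uniformly by $\max(1,c_d)$. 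As the number of partitions and the weights depend only on $d$, this yields a constant $C_d'$ with
$$\max_{1\le r\le d}\bigl|a_r\bigl(D_{1/\sqrt{n}}(p^{\boxplus_d n})\bigr)-a_r(h)\bigr|\le\frac{C_d'}{\sqrt{n}},$$
so the two monic polynomials are $O(n^{-1/2})$-close coefficientwise.

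Finally I would convert coefficient closeness into a L\'evy-distance bound. Both polynomials are real-rooted --- $p^{\boxplus_d n}$ because finite free convolution preserves real-rootedness and $h$ because $H_d$ does --- and by the support lemma all their roots lie in the fixed compact interval $I=[-\sqrt{d-1},\sqrt{d-1}]$. The decisive structural fact is that $H_d$ has $d$ simple real roots, so $h$ has $d$ distinct roots $r_1<\cdots<r_d$ with a minimal gap $\eta_d>0$, and $|h(z)|\ge c_d'\,\rho$ on the circle of radius $\rho<\eta_d/2$ about each $r_i$. Writing $q_n:=D_{1/\sqrt{n}}(p^{\boxplus_d n})=h+e_n$, the coefficient estimate gives $|e_n(z)|\le C_d''\,n^{-1/2}$ uniformly on a fixed neighbourhood of $I$; hence for $n$ larger than some $n_0(d)$ the error is dominated by $|h|$ on these circles, and Rouch\'e's theorem places exactly one root of $q_n$ in each disk of radius $\rho=O(n^{-1/2})$ about $r_i$. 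Since $q_n$ is real-rooted and the disks are disjoint, this produces a bijection matching the roots of $h$ and of $q_n$ within a radius $\rho$, and a matching of atoms within $\rho$ forces $d_L\le\rho$; thus $d_L\le C_d''' n^{-1/2}$ for $n\ge n_0(d)$. For the finitely many $n<n_0(d)$ I would use the trivial bound $d_L\le1\le\sqrt{n_0(d)}/\sqrt{n}$, and absorbing all constants into a single $C_d$ completes the proof.

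The main obstacle is this last step. A generic perturbation of the coefficients controls the roots only in a H\"older fashion --- with exponent $1/d$ near a multiple root --- which would degrade the rate to $n^{-1/(2d)}$. Recovering the sharp exponent $n^{-1/2}$ depends entirely on the simplicity of the roots of $H_d$ and on a quantitative, $d$-dependent lower bound for the gaps $\eta_d$ (equivalently for $|H_d'|$ at its roots), which is precisely what makes the Rouch\'e comparison Lipschitz rather than merely H\"older.
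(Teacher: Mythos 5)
Your proposal is correct and follows essentially the same route as the paper: bound the rescaled cumulants $\kappa_j(q_n)=n^{1-j/2}\kappa_j(p)$ using the two lemmas, transfer this to an $O(n^{-1/2})$ coefficientwise bound via the coefficient--cumulant formula (the paper identifies $a_j^h$ directly as the sum over partitions with blocks of size at most two, rather than telescoping, but the outcome is identical), and then apply Rouch\'e's theorem on small circles around the $d$ simple roots of $D_{1/\sqrt{d}}(H_d)$ to match roots within $O(n^{-1/2})$ and conclude for the L\'evy distance. Your explicit handling of the finitely many small $n$ via the trivial bound $d_L\le 1$ is a minor tidying of a point the paper leaves implicit.
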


\begin{proof}
Let us denote $h=D_{1/\sqrt{d}} (H_d)$, $p_n=p^{\boxplus_d n}$ and $q_n=D_{1/\sqrt{n}} (p_n)$. By the coefficient-cumulant formula, we know that
\begin{eqnarray*}
a_j^{q_n}&=&\frac{(d)_j}{d^j j!} \sum _{\pi \in \mathcal{P}(j)} d ^{| \pi |}  \mu(0_j,\pi) \kappa_{\pi}(q_n) \\
&=& a_j^{h} + \frac{(d)_j}{d^j j!}\sum_{\pi\in\mathcal{P}(j)\backslash\mathcal{P}_{12}(j) } d^{|\pi|} \mu(0_j,\pi)\kappa_\pi(q_n),
\end{eqnarray*}
where $\mathcal{P}_{12}(j)$ is the set of partitions $\pi=(V_1,\ldots,V_r)\in\mathcal{P}(j)$ such that $|V_i|\leq 2$ for all $i\in\{1,\ldots, r\}$ (i.e., $\pi=(V_1,\ldots,V_r)\in\mathcal{P}(j)\backslash\mathcal{P}_{12}(j)$, if $|V_i|>2$ for some $i\in\{1,\ldots, r\}$).
 
Recall that
$$|\kappa_s(q_n)|=|\kappa_s(D_{1/\sqrt{n}} (p_n))|=\frac{n}{n^{s/2}}|\kappa_s(p)|\leq n^{1-s/2}c,$$
for $s=3,\ldots,d$, where $c:=c_d$ from Lemma 3.2. Thus, for any $3\leq j\leq d$ and $\pi=(V_1,\ldots,V_r)\in\mathcal{P}(j)\backslash\mathcal{P}_{12}(j)$ we get
\begin{equation} \label{mainbound} |\kappa_\pi(q_n)|\leq c^r\cdot n^{r} \cdot n^{-\frac{|V_1|+\cdots+|V_r|}{2}}= c^r n^{r-\frac{j}{2}}\leq c^r n^{\frac{j}{3}-\frac{j}{2}}=c^r n^{-\frac{j}{6}} \leq c^d n^{-\frac{1}{2}}.
\end{equation}

Then,
$$|a_j^{q_n}-a_j^h|\leq \frac{c^d K_1(d)}{\sqrt{n}},\qquad \forall j\in\{1,\ldots,d\}$$
where $$K_1(d)= \max_{1\leq j\leq d} \frac{(d)_j}{d^j j!}\sum_{\pi\in\mathcal{P}(j)\backslash\mathcal{P}_{12}(j) } d^{|\pi|} |\mu(0_j,\pi)|.$$

Let's denote $z_1,z_2,\cdots,z_d$ the $d$ distinct roots of $h$ and $\delta=\frac{1}{2}\min_{1\leq i < j\leq d} |z_i-z_j|$. For $0<\varepsilon<\delta$ we define $B_i=\{z\in\mathbb{C}:|z-z_i|\leq \varepsilon \}$ and $\partial B_i=\{z\in\mathbb{C}:|z-z_i|= \varepsilon \}$. For a fixed root $i$, using the previous bound we can see that for any $z\in \partial B_i$ we have that
$$|q_n(z)-h(z)|\leq \left\vert \sum_{j=0}^d z^{d-j} (-1)^j (a^{q_n}_j-a_j^h) \right\vert \leq \sum_{j=1}^d |z|^{d-m} |a^{q_n}_j-a_j^h|$$
$$\leq \frac{c^d K_1(d)}{\sqrt{n}} \sum_{j=1}^d (|z_i|+|\varepsilon|)^{d-j} \leq \frac{c^d K_1(d)K_2(d)}{\sqrt{n}} $$
where $$K_2(d)=\max_{1\leq i\leq d} \sum_{j=1}^d (|z_i|+|\varepsilon|)^{d-j}.$$

On the other hand, if $z\in \partial B_i$, we know that
$$|h(z)|=|(z-z_0)\cdots (z-z_{n-1})|=|z-z_1|\cdots |z-z_n|\geq |z-z_i| \delta^{d-1}=\varepsilon\delta^{d-1}.$$

Finally, if we take
$$n > \frac{c^{2d} K(d)}{\varepsilon^2} ,$$
where $K(d)=\frac{K^2_1(d)K^2_2(d)}{\delta^{2d-2}}$. Since $c^{2d} K(d)$ does not depend on $i$, we get that for any $i=1,\ldots, n$, if $z\in \partial B_i$, then
$$|q_n(z)-h(z)|\leq \frac{c^d K_1(d)K_2(d)}{\sqrt{n}} < \varepsilon\delta^{d-1}\leq |h(z)|\leq |h(z)|+|q_n(z)|.$$
Thus, Rouch\'e's theorem implies that $q_n$ and $h$ have the same number of roots (counting multiplicity) in $B_i$ for $i=1,\ldots, n$. By the definition of the $B_i$ we know that they are pairwise disjoint and each one contains exactly one of the $d$ roots of $h$. Thus, each $B_i$ contains exactly one of the $d$ roots of $q_n$ implying that distance between the roots of $q_n$ and $h$, (and therefore the L\'evy distance) is less than $\varepsilon$.
\end{proof}

Observe that Theorem 1.1 directly gives a bound for $T$ in the next proposition.
\begin{prop}[\cite{arizmendi2018cumulants}]\label{large t}
Let $p\neq x^d$ be a real polynomial, then there exists $T>0$ such that for all $t>T$ the polynomial $p^{\boxplus_d t}$ has $d$ different real roots. 
\end{prop}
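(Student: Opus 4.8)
The plan is to read off real-rootedness for large $t$ directly from Theorem 1.1, by first reducing to its normalized hypotheses and then exploiting that the limiting polynomial $D_{1/\sqrt{d}}(H_d)$ has $d$ \emph{distinct real} roots. Throughout I use that $p^{\boxplus_d t}$ is well defined for real $t>0$ through $\kappa_j(p^{\boxplus_d t})=t\kappa_j(p)$ together with the coefficient--cumulant formula of Proposition \ref{aaacum}, which makes the coefficients polynomial in $t$.

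First I would reduce to the case $\kappa_1=0$, $\kappa_2=1$. The number of distinct real roots of a polynomial is invariant under any affine change of variable $x\mapsto \alpha x+\beta$ with $\alpha\neq 0$, and such a change acts on the cumulants only by shifting and scaling $\kappa_1$ and by $\kappa_j\mapsto\alpha^{j}\kappa_j$ for $j\ge 2$ (homogeneity together with translation invariance of the higher cumulants). Hence, provided $\kappa_2(p)>0$, I may center $p$ to make $\kappa_1=0$ and then dilate by $1/\sqrt{\kappa_2(p)}$ to make $\kappa_2=1$. By additivity and homogeneity this normalization commutes with the convolution power: the normalized polynomial raised to $\boxplus_d t$ is the very same affine image of $p^{\boxplus_d t}$, so it suffices to treat a polynomial with $\kappa_1=0$, $\kappa_2=1$. (The degenerate case $\kappa_2(p)=0$, i.e. $p=(x-c)^d$ up to translation, must be excluded, since there $p^{\boxplus_d t}=(x-tc)^d$ never acquires distinct roots; the condition $p\neq x^d$ should be understood in this centered sense, as $\kappa_2(p)>0$.)

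Next I would run the Rouch\'e estimate behind Theorem 1.1, now for the continuous parameter $t$. The cumulant bound $|\kappa_s(D_{1/\sqrt t}(p^{\boxplus_d t}))|=t^{1-s/2}|\kappa_s(p)|$ holds for real $t$, so the proof applies verbatim with $n$ replaced by $t$: writing $h=D_{1/\sqrt d}(H_d)$ with its $d$ distinct real roots $z_1,\dots,z_d$ and $\delta=\tfrac12\min_{i<j}|z_i-z_j|$, for every $t$ exceeding the \emph{explicit} threshold $T:=c^{2d}K(d)/\varepsilon^2$ (with any fixed $0<\varepsilon<\delta$) each disk $B_i=\{|z-z_i|\le\varepsilon\}$ contains exactly one root of $q_t=D_{1/\sqrt t}(p^{\boxplus_d t})$. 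This explicit $T$ is precisely the bound that Theorem 1.1 is observed to supply.

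Finally I would upgrade ``one root near each $z_i$'' to ``one \emph{real} root near each $z_i$''. Since $p$ is real, $q_t$ has real coefficients and its non-real roots occur in conjugate pairs; as each disk $B_i$ is centered at the real point $z_i$, it is invariant under complex conjugation, so a single non-real root in $B_i$ would force its conjugate into $B_i$ as well, contradicting that $B_i$ contains exactly one root. Thus each $B_i$ holds exactly one simple real root, the disks are disjoint, and $q_t$ has $d$ distinct real roots for all $t>T$; undoing the dilation by $\sqrt t$ and the initial affine normalization (both with positive scaling) preserves this, giving $d$ distinct real roots of $p^{\boxplus_d t}$. The step I expect to be most delicate is the reduction in the second paragraph: one must check that the affine normalization genuinely commutes with $\boxplus_d t$ and, above all, correctly isolate the non-degeneracy hypothesis $\kappa_2(p)>0$ under which Theorem 1.1 is available, since the literal condition $p\neq x^d$ is not by itself enough.
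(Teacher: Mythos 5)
Your proposal is correct, but note that the paper does not actually prove this proposition: it is imported from \cite{arizmendi2018cumulants}, and the present paper only remarks that Theorem 1.1 ``directly gives a bound for $T$''. What you wrote is a complete and correct realization of that remark, and it supplies two things the remark glosses over. First, the \emph{statement} of Theorem 1.1 only bounds the L\'evy distance between root distributions, which by itself does not yield real-rootedness of $q_t$ (the L\'evy distance is not even defined unless $q_t$ is already real-rooted); you rightly go back into the Rouch\'e localization inside the proof, run it with the continuous parameter $t$ (legitimate, since $|\kappa_s(q_t)|=t^{1-s/2}|\kappa_s(p)|$ holds for all real $t>0$ once $p^{\boxplus_d t}$ is defined through $\kappa_j\mapsto t\kappa_j$ and Proposition \ref{aaacum}), and then use the conjugate-pair argument on the conjugation-invariant disks $B_i$ to force each localized root to be real and simple. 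Second, your point about the hypothesis is a genuine catch: as literally stated the proposition fails for $p=(x-1)^d$, since then $p^{\boxplus_d t}=(x-t)^d$ for every $t$; the right non-degeneracy condition is $\kappa_2(p)>0$, and your affine normalization does commute with $\boxplus_d t$ by additivity and homogeneity of the cumulants (translation by $c$ being convolution with $(x-c)^d$, whose higher cumulants vanish). The one caveat is that Theorem 1.1 and Lemmas 3.1--3.2 are proved for real-rooted $p$ (the bound $\sum_i\lambda_i^2=d-1\Rightarrow|\lambda_i|<\sqrt{d-1}$ needs real $\lambda_i$), so your derivation covers real-rooted $p$ with $\kappa_2(p)>0$; if the cited proposition is intended for polynomials with real coefficients but possibly non-real roots, that more general case is not reached by this route and requires the original argument of \cite{arizmendi2018cumulants}.
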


Finally, we show that one cannot do better than $O(\sqrt{n})$ as long as $m_3(p)\neq0$.

\begin{prop}
Let $p$ be a real polynomial with $\kappa_1=0$ and $\kappa_2=1$ and $|m_3|=\alpha \neq0$. Then,  for all $n>0$
$$L\left(D_{1/\sqrt{n}} (p^{\boxplus_d n}),D_{1/\sqrt{d}} (H_d)\right)\geq \frac{\alpha}{3d\sqrt{n}}.$$
\end{prop}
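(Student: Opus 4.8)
The plan is to reduce everything to a single scalar invariant, the third moment, and then to show that the L\'evy distance cannot be smaller than a fixed multiple of the gap between the third moments of the two polynomials.

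First I would record the third moment of $q_n:=D_{1/\sqrt n}(p^{\boxplus_d n})$. By additivity and homogeneity of the finite free cumulants one has $\kappa_1(q_n)=0$, $\kappa_2(q_n)=1$, and
$$\kappa_3(q_n)=n^{-3/2}\kappa_3(p^{\boxplus_d n})=n^{-3/2}\cdot n\,\kappa_3(p)=n^{-1/2}\kappa_3(p).$$
Since $\kappa_1(q_n)=0$, the moment-cumulant formula for $m_3$ collapses to $m_3=\tfrac{(d-1)(d-2)}{d^2}\kappa_3$, so $m_3(q_n)=n^{-1/2}m_3(p)$ and hence $|m_3(\mu_{q_n})|=\alpha/\sqrt n$. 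On the other hand $h=D_{1/\sqrt d}(H_d)$ has $\kappa_1(h)=0$ and $\kappa_2(h)=1$, and its distribution $\mu_h$ is symmetric about the origin (the Hermite polynomial has definite parity), whence $m_3(\mu_h)=0$. Thus $|m_3(\mu_{q_n})-m_3(\mu_h)|=\alpha/\sqrt n$. Moreover, since both $q_n$ and $h$ satisfy $\kappa_1=0$ and $\kappa_2=1$, Lemma 3.1 places both distributions inside $(-\sqrt{d-1},\sqrt{d-1})\subset(-\sqrt d,\sqrt d)$; in particular the first two moments of $\mu_{q_n}$ and $\mu_h$ also coincide, as $m_2=\tfrac{d-1}{d}$ in both cases.

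The core step is a stability estimate, namely that for these two measures
$$|m_3(\mu_{q_n})-m_3(\mu_h)|\le 3d\,\, d_L(\mu_{q_n},\mu_h).$$
Writing $F$ and $G$ for the cumulative distribution functions and integrating by parts over the common compact support (the boundary terms vanish because no mass sits at $\pm\sqrt{d-1}$), I get $m_3(\mu_{q_n})-m_3(\mu_h)=-3\int x^2(F-G)\,dx$. The decisive simplification is that the zeroth, first and second moments of $\mu_{q_n}$ and $\mu_h$ agree, which by the same integration by parts forces $\int (F-G)\,dx=0$ and $\int x\,(F-G)\,dx=0$; this lets me recenter $x^2$ to $x^2-bx-c$ without altering the integral, and together with the support bound $|x|<\sqrt d$ (so $x^2<d$) it is exactly what turns the L\'evy-band inequalities $F(x-\varepsilon)-\varepsilon\le G(x)\le F(x+\varepsilon)+\varepsilon$ into the clean constant $3d$. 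Granting the stability estimate, the proposition is immediate: setting $\varepsilon:=d_L(\mu_{q_n},\mu_h)$ we obtain $\alpha/\sqrt n\le 3d\,\varepsilon$, that is, $\varepsilon\ge \alpha/(3d\sqrt n)$.

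The hard part will be the stability estimate with the sharp constant, and its difficulty is precisely the reason we are compelled to use the L\'evy distance at all. Unlike the Kolmogorov distance, the L\'evy distance only registers horizontal displacements up to the size of a single atom: moving one of the $d$ atoms arbitrarily far alters $m_3$ without bound while increasing $d_L$ by at most $1/d$. Consequently no inequality of the form $|m_3\text{ difference}|\le C_d\, d_L$ can hold for general compactly supported atomic measures, so the coincidence of the first two moments is genuinely essential: it forbids such isolated long-range moves and forces any third-moment discrepancy to be realised by spread-out, hence L\'evy-expensive, rearrangements of the $d$ equally weighted atoms. Making this cancellation quantitative—pairing the additive $\varepsilon$-slack in the L\'evy band against the vanishing recentred zeroth and first moments, and controlling the $x^2$ weight by $d$ on the support—is the only delicate point; the remainder is bookkeeping.
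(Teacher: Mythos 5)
Your reduction is the paper's first step verbatim: $\kappa_3(q_n)=n^{-1/2}\kappa_3(p)$, hence $|m_3(q_n)|=\alpha/\sqrt n$ while $m_3(h)=0$, so everything hinges on the inequality $|m_3(\mu_{q_n})-m_3(\mu_h)|\le 3d\, d_L(\mu_{q_n},\mu_h)$. That inequality is exactly what you do not prove. You sketch an integration-by-parts route (the $m_3$ difference equals $-3\int x^2(F-G)\,dx$, recentre $x^2$ using the matching of $m_0,m_1,m_2$, feed in the L\'evy band) and then concede that making the cancellation quantitative is the delicate part. As it stands the sketch does not deliver the constant: the L\'evy band $F(x-\varepsilon)-\varepsilon\le G(x)\le F(x+\varepsilon)+\varepsilon$ controls neither $\|F-G\|_\infty$ nor $\int|F-G|$ by $\varepsilon$ alone, and the bounds it does give (e.g.\ $\int|F-G|\le 2(1+2\sqrt{d-1})\varepsilon$ on the common support, multiplied by $\sup|x^2-c|\ge (d-1)/2$ after optimal recentring) produce a constant of order $d^{3/2}$, not $3d$. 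So the core quantitative step is missing, and the proposed route does not obviously close it.

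The paper's argument is far more elementary and rests on a structural fact your discussion explicitly sets aside: both $\mu_{q_n}$ and $\mu_h$ are uniform measures on exactly $d$ atoms of mass $1/d$. For two such measures, $d_L<\varepsilon$ with $\varepsilon$ below the atom size $1/d$ (the relevant regime for the contradiction) forces the sorted roots to pair up, $|\lambda_i(q_n)-\lambda_i(h)|<\varepsilon$ for every $i$, whence
$$|m_3(q_n)-m_3(h)|\le\frac1d\sum_{i=1}^d|\lambda_i(q_n)-\lambda_i(h)|\,\bigl|\lambda_i(q_n)^2+\lambda_i(q_n)\lambda_i(h)+\lambda_i(h)^2\bigr|<\varepsilon\cdot 3d,$$
using Lemma 3.1 to bound each $\lambda_i^2$ by $d-1<d$; assuming $d_L<\alpha/(3d\sqrt n)$ then contradicts $|m_3(q_n)|=\alpha/\sqrt n$. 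No matching of $m_1$ and $m_2$ enters at all. Your observation that moving a single atom changes $m_3$ unboundedly while costing at most $1/d$ in L\'evy distance is true but irrelevant once $\varepsilon<1/d$, and it is the equal-weight atomic structure --- not the coincidence of the lower moments --- that makes the estimate work. Replace the unproven stability estimate with this root-pairing argument and the proof closes.
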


\begin{proof}
We use again the notation $h=D_{1/\sqrt{d}} (H_d)$, $p_n=p^{\boxplus_d n}$ and $q_n=D_{1/\sqrt{n}} (p_n)$ and suppose that $L(q_n,h)< \frac{\alpha}{3d\sqrt{n}}.$
Since $\kappa_1(q_n)=0$, from the moment cumulant formulas we have $m_3(q_n)=\frac{(d-1)(d-2)}{d^2} \kappa_3 (q_n)$ 
and then 
$$|m_3(q_n)|=\tfrac{(d-1)(d-2)}{d^2}|\kappa_3(q_n)|=\tfrac{(d-1)(d-2)}{d^2}\frac{n}{n^{3/2}}|\kappa_3(p)|=\frac{|m_3(p)|}{\sqrt{n}}=\frac{\alpha}{\sqrt{n}}.$$
Since $m_3(h)=0$, we can compute 
$$m_3(q_n)=\frac{1}{d}\sum_{i=1}^d\lambda^3_i(q_n)=\frac{1}{d}\sum_{i=1}^d\lambda^3_i(q_n)-\frac{1}{d}\sum_{i=1}^d\lambda^3_i(h),$$
and thus 
\begin{eqnarray*}|m_3(q_n)|&\leq & \frac{1}{d} \sum_i|\lambda^3_i(q_n)-\lambda^3_i(h)|\\
&=&\frac{1}{d}\sum_{i=1}^d|\lambda_i(q_n)-\lambda_i(h)||\lambda_i^2(q_n)+\lambda_i(q_n)\lambda_i(h)+\lambda_i^2(h)|\\
&<&\frac{1}{d}\sum_{i=1}^d  \left(\frac{\alpha}{3d\sqrt{n}} \right) (d+d+d)= \frac{\alpha}{\sqrt{n}.}
\end{eqnarray*}

Where we used in the last inequality the assumption that $L(q_n,h)< \frac{\alpha}{3d\sqrt{n}}.$ This is a contradiction since the inequality is strict.
\end{proof}

\begin{rem} A specific example with $\kappa_3\neq0$ is the finite free Poisson distribution which has cumulants $\kappa_n=\alpha$ for all $n$. If $\alpha d $ is a positive integer we obtain a valid polynomial.  This is a modification of a Laguerre polynomial, thus we obtain a precise estimate for the distance between the roots of certain Laguerre polynomials and the Hermite polynomials.
\end{rem}

\begin{rem}
A closer look at \eqref{mainbound} shows that if $m_3(p)=0$ then the convergence rate is of order $1/n.$ Indeed, $m_3(p)=0$ implies $\kappa_3(q_n)=\kappa_3(p)=0$. So in \eqref{mainbound} we only need to consider partitions with $|V_i|\geq 4$. In this case, for any $4\leq j\leq d$ we have
$$|\kappa_\pi(p)|\leq c^r n^{r-\frac{j}{2}}\leq c^r n^{\frac{j}{4}-\frac{j}{2}}=c^r n^{-\frac{j}{4}} \leq c^d n^{-1}.$$
\end{rem}

Finally, let us mention that while it is very tempting to let $d$ go to infinity, possibly together with $n$, to obtain a Berry-Esseen bound in free probability, there are two problems. First, the quantity $C_d$, as we obtained it, increases broadly as $d\to \infty$, and second, there is, for the moment not a good bound between finite free and free convolutions.

\bibliographystyle{alpha}

\end{document}